\documentclass[11pt, leqno]{amsart}

\usepackage{amsfonts, amsmath, amssymb, bm, cite, color, mathrsfs, stmaryrd, multirow}
\usepackage[abbrev]{amsrefs}
\usepackage[top=30mm, bottom=30mm, left=25mm, right=25mm]{geometry}

\definecolor{refkey}{rgb}{1,0,0}
\definecolor{labelkey}{rgb}{1,0,0}

\usepackage{mathtools}
\mathtoolsset{showonlyrefs=true}

\numberwithin{equation}{section}

\allowdisplaybreaks[1]

\usepackage{amsthm}
\newtheorem{theo}{Theorem}[section]

\newtheorem{prop}[theo]{Proposition}

\theoremstyle{definition}

\begin{document}
\title[]{An optimal time-singularity of the estimate for the heat semigroup related to the critical Sobolev embedding}

\author[]{Yi C.~Huang}
\address[Yi C.~Huang]{School of Mathematical Sciences, Nanjing Normal University, Nanjing 210023, People's Republic of China}
\email{Yi.Huang.Analysis@gmail.com}

\author[]{Tohru Ozawa}
\address[Tohru Ozawa]{Department of Applied Physics, Waseda University, 3-4-1 Okubo, Shinjuku-ku, Tokyo, 169-8555, Japan}
\email{txozawa@waseda.jp}

\author[]{Chenmin Sun}
\address[Chenmin Sun]{CNRS, Universit\'e Paris-Est Cr\'eteil, Laboratoire d'Analyse et de Math\'ematiques Appliqu\'ees, UMR CNRS 8050, Cr\'eteil, France}
\email{chenmin.sun@cnrs.fr}

\author[]{Taiki Takeuchi}
\address[Taiki Takeuchi]{Institute of Mathematics for Industry, Kyushu University, 744 Motooka, Nishi-ku, Fukuoka, 819-0395, Japan}		
\email{takeuchi.taiki.643@m.kyushu-u.ac.jp}

\thanks{}

\subjclass[2020]{Primary 35A23; Secondary 35B65, 35K08, 42B37, 46E35}

\keywords{Heat semigroup; Brezis-Gallou\"et inequality; Sobolev-type inequalities; Sharp estimates}

\date{}

\maketitle

\begin{abstract}
We give a certain $L^{\infty}(\mathbb{R}^2)$-estimate for the heat semigroup $\{e^{t\Delta}\}_{t \ge 0}$ that is closely related to the fact $H^1(\mathbb{R}^2) \not\subset L^{\infty}(\mathbb{R}^2)$, i.e., the critical Sobolev (non-)embedding and the standard Brezis-Gallou\"et inequality. While we provide several approaches to show such an assertion, we also reveal that the time-singularity of our estimate as $t \to 0^+$ is indeed optimal.
\end{abstract}

\section{Introduction}

This paper aims to verify a certain $L^{\infty}(\mathbb{R}^2)$-estimate for the heat semigroup. We first summarize the notations and definitions in this paper: For a complex-valued function $f$ belonging to the Schwartz space $\mathscr{S}(\mathbb{R}^2)$, we define the Fourier transform $\mathcal{F}$ and its inverse $\mathcal{F}^{-1}$ by setting
$$(\mathcal{F}f)(\xi) \coloneqq \frac{1}{2\pi}\int_{\mathbb{R}^2}e^{-ix\cdot\xi}f(x)dx, \quad (\mathcal{F}^{-1}f)(x) \coloneqq \frac{1}{2\pi}\int_{\mathbb{R}^2}e^{ix\cdot\xi}f(\xi)d\xi.$$
As $\mathcal{F}$ and $\mathcal{F}^{-1}$ are canonically extended to the operators mapping $L^2(\mathbb{R}^2)$ onto itself, the (inhomogeneous) Sobolev spaces $H^s(\mathbb{R}^2)$ are defined by
$$H^s(\mathbb{R}^2) \coloneqq \left\{f \in L^2(\mathbb{R}^2) \, \left| \, \|f\|_{H^s(\mathbb{R}^2)} \coloneqq \left(\int_{\mathbb{R}^2}(1+|\xi|^2)^s|(\mathcal{F}f)(\xi)|^2d\xi\right)^{1/2}<\infty \right.\right\}$$
for $s \ge 0$. We remark that our definition of $\mathcal{F}$ and $\mathcal{F}^{-1}$ implies that
\begin{align}
\label{PI}
\|\mathcal{F}f\|_{L^2(\mathbb{R}^2)} &=\|\mathcal{F}^{-1}f\|_{L^2(\mathbb{R}^2)}=\|f\|_{L^2(\mathbb{R}^2)}, \\
\label{conv}
\mathcal{F}^{-1}[fg] &=\frac{1}{2\pi}(\mathcal{F}^{-1}f)*(\mathcal{F}^{-1}g)
\end{align}
for all $f,g \in L^2(\mathbb{R}^2)$; although such properties are well known, it should be necessary to pay attention to the constants. We also define the heat semigroup $\{e^{t\Delta}\}_{t \ge 0}$ and the function $M:(0,\infty) \to [0,\infty)$ by letting $e^{t\Delta} \coloneqq \mathcal{F}^{-1}e^{-t|\cdot|^2}\mathcal{F}$ and
$$M(t) \coloneqq \sup_{f \in H^1(\mathbb{R}^2) \setminus \{0\}}\frac{\|e^{t\Delta}f\|_{L^{\infty}(\mathbb{R}^2)}}{\|f\|_{H^1(\mathbb{R}^2)}}.$$
Under these definitions, we shall prove the following assertions:

\begin{theo}\label{CSE}
There exists a constant $C>0$ such that
$$\sup_{t>0}\frac{M(t)}{(\log(e+1/t))^{1/2}} \le C.$$
\end{theo}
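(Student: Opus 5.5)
We will work entirely on the Fourier side. For $f \in H^1(\mathbb{R}^2)$ and $t>0$, the function $e^{-t|\xi|^2}(\mathcal{F}f)(\xi)$ is integrable, so $e^{t\Delta}f$ is bounded and continuous. By the definition of $\mathcal{F}^{-1}$ we have the pointwise bound
$$\|e^{t\Delta}f\|_{L^{\infty}(\mathbb{R}^2)} \le \frac{1}{2\pi}\int_{\mathbb{R}^2}e^{-t|\xi|^2}|(\mathcal{F}f)(\xi)|\,d\xi.$$
To see that the integral is finite, we insert the weight $(1+|\xi|^2)^{1/2}$ and apply Cauchy--Schwarz:
$$\|e^{t\Delta}f\|_{L^{\infty}(\mathbb{R}^2)} \le \frac{1}{2\pi}\left(\int_{\mathbb{R}^2}\frac{e^{-2t|\xi|^2}}{1+|\xi|^2}\,d\xi\right)^{1/2}\|f\|_{H^1(\mathbb{R}^2)}.$$
This gives $M(t)\le (2\pi)^{-1}I(t)^{1/2}$, where $I(t)\coloneqq\int_{\mathbb{R}^2}e^{-2t|\xi|^2}(1+|\xi|^2)^{-1}d\xi$. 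It then remains to show that $I(t)\lesssim \log(e+1/t)$ uniformly in $t>0$.

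Passing to polar coordinates and substituting $r^2=s$ gives
$$I(t)=\pi\int_0^{\infty}\frac{e^{-2ts}}{1+s}\,ds.$$
We split the integral at $s=1/t$.
\begin{itemize}
\item On $[0,1/t]$ we drop the exponential and obtain the bound $\log(1+1/t)$.
\item On $[1/t,\infty)$ we use $1+s\ge s$ and substitute $u=ts$. This gives $\int_1^\infty e^{-2u}u^{-1}du$, an absolute constant.
\end{itemize}
Hence $I(t)\le \pi(\log(1+1/t)+c)$. Both $\log(1+1/t)$ and the constant $c$ are bounded by a multiple of $\log(e+1/t)$, since $\log(e+1/t)\ge 1$. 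Taking square roots yields $M(t)\le C(\log(e+1/t))^{1/2}$ for all $t>0$, which is Theorem~\ref{CSE}.

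There is essentially no serious obstacle here. The one point needing a little care is the uniformity over all $t>0$, including large $t$. This is harmless, because $I(t)$ is decreasing in $t$ and bounded by $I$ evaluated at any fixed time, while the denominator $\log(e+1/t)$ stays at least $1$. The constant normalization $1/(2\pi)$ from our convention for $\mathcal{F}^{-1}$ only affects the value of $C$. One could alternatively argue via the Brezis--Gallou\"et splitting into low and high frequencies at $|\xi|\sim t^{-1/2}$, but the direct Cauchy--Schwarz computation above already performs that splitting implicitly.
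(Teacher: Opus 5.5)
Your proof is correct and is essentially the paper's direct-computation proof (Proposition \ref{preCSE} followed by the estimate of the resulting integral): your Cauchy--Schwarz step on the Fourier side yields exactly the same quantity $\frac{1}{2\pi}\bigl(\int_{\mathbb{R}^2} e^{-2t|\xi|^2}(1+|\xi|^2)^{-1}\,d\xi\bigr)^{1/2}$ that the paper obtains via Young's inequality and \eqref{PI}. The only difference is how the scalar integral is bounded: you split at $s=1/t$, whereas the paper writes it as $\pi e^{2t}E_1(2t)$ and bounds $E_1$. Your route is slightly simpler and, if tracked, gives the uniform constant $(1+E_1(2))^{1/2}/(2\sqrt{\pi})$, which is smaller than the paper's stated $(2\pi)^{-1/2}$.
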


\begin{theo}\label{optCSE}
There holds
$$\inf_{0<t<1/e}\frac{M(t)}{(\log(e+1/t))^{1/2}} \ge \frac{1}{2\sqrt{2\pi}e^2}.$$
\end{theo}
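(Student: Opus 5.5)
The plan is to compute $M(t)$ essentially exactly by duality on the Fourier side, and then bound the resulting one-dimensional integral from below by a logarithm.

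\emph{Step 1: lower bound for $M(t)$ by a single test function.} For $f\in H^1(\mathbb{R}^2)$, Cauchy--Schwarz gives $e^{-t|\xi|^2}(\mathcal{F}f)(\xi)\in L^1(\mathbb{R}^2)$. Hence $e^{t\Delta}f$ is a bounded continuous function, and its value at the origin is
$$(e^{t\Delta}f)(0)=\frac{1}{2\pi}\int_{\mathbb{R}^2}e^{-t|\xi|^2}(\mathcal{F}f)(\xi)\,d\xi .$$
Since $|(e^{t\Delta}f)(0)|\le\|e^{t\Delta}f\|_{L^\infty(\mathbb{R}^2)}$, it suffices to choose $f$ well. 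I take $f$ defined by
$$(\mathcal{F}f)(\xi)=\frac{e^{-t|\xi|^2}}{1+|\xi|^2},$$
which lies in $H^1(\mathbb{R}^2)$ because $(1+|\xi|^2)|\mathcal{F}f|^2=e^{-2t|\xi|^2}/(1+|\xi|^2)$ is integrable. This is the extremizer in Cauchy--Schwarz. Set
$$I(t)\coloneqq\int_{\mathbb{R}^2}\frac{e^{-2t|\xi|^2}}{1+|\xi|^2}\,d\xi .$$
Then $(e^{t\Delta}f)(0)=\frac{1}{2\pi}I(t)$ and $\|f\|_{H^1(\mathbb{R}^2)}=I(t)^{1/2}$. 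Therefore
$$M(t)\ge\frac{1}{2\pi}I(t)^{1/2}.$$
In fact equality holds, but only the inequality is needed.

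\emph{Step 2: reduce to a one-dimensional integral.} Pass to polar coordinates and substitute $r=|\xi|^2$, so that $d\xi$ becomes $\pi\,dr$. This gives
$$I(t)=\pi\int_0^\infty\frac{e^{-2tr}}{1+r}\,dr .$$
Restrict the integral to $0\le r\le 1/t$, where $e^{-2tr}\ge e^{-2}$. This yields
$$I(t)\ge\pi e^{-2}\log(1+1/t).$$
Consequently
$$M(t)^2\ge\frac{1}{4\pi}\,e^{-2}\log(1+1/t).$$

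\emph{Step 3: compare logarithms for $0<t<1/e$.} Here $1/t>e$, so $\log(1/t)>1>\log 2$. Then
$$\log(e+1/t)\le\log(2/t)=\log 2+\log(1/t)\le 2\log(1/t)\le 2\log(1+1/t).$$
Inserting this into Step 2 gives
$$M(t)^2\ge\frac{\log(e+1/t)}{8\pi e^2},\qquad\text{i.e.}\qquad M(t)\ge\frac{1}{2\sqrt{2\pi}\,e}\bigl(\log(e+1/t)\bigr)^{1/2}.$$
Since $e^{-1}\ge e^{-2}$, this is at least the constant $\frac{1}{2\sqrt{2\pi}e^2}$ claimed in Theorem~\ref{optCSE}.

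The only points needing care are the constant conventions in $\mathcal{F}$ (the factors $1/(2\pi)$ and the Jacobian $\pi$), and justifying the pointwise evaluation at $0$. The latter is why I first note that $e^{-t|\xi|^2}\mathcal{F}f\in L^1$; I expect no genuine obstacle beyond this bookkeeping.
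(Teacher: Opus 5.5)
Your argument is correct, and it takes a genuinely different route from the paper's.

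The paper tests with the family $\mathcal{F}[f(\cdot,\lambda)]=|\xi|^{-2}\eta_{\lambda}(|\xi|)$, supported on the annulus $1<|\xi|<\lambda$. This is the classical logarithmically divergent profile behind $H^1(\mathbb{R}^2)\not\subset L^{\infty}(\mathbb{R}^2)$. The paper bounds $\|f(\cdot,\lambda)\|_{H^1(\mathbb{R}^2)}\le 2\sqrt{\pi}(\log\lambda)^{1/2}$ and $e^{t\Delta}[f(\cdot,\lambda)](0)\ge e^{-t\lambda^2}\log\lambda$, and then tunes $\lambda=(e+1/t)^{1/2}$. That makes the mechanism transparent, since frequencies up to $|\xi|\sim t^{-1/2}$ contribute logarithmically, and it needs no knowledge of an extremizer.

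You instead test with $\mathcal{F}f=e^{-t|\xi|^2}(1+|\xi|^2)^{-1}$. Up to a constant, this is the Riesz representer of the functional $f\mapsto(e^{t\Delta}f)(0)$ on $H^1(\mathbb{R}^2)$, i.e.\ the equality case of the estimate in Proposition \ref{preCSE}. Combined with that proposition, your Step 1 gives the exact formula $M(t)=\frac{1}{2\pi}I(t)^{1/2}=\frac{e^t}{2\sqrt{\pi}}(E_1(2t))^{1/2}$. This settles the paper's remark that Proposition \ref{preCSE} ``might be sharp'': it is in fact an identity.

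Even your crude bound $I(t)\ge\pi e^{-2}\log(1+1/t)$ improves the constant by a factor $e$. Moreover, since $E_1(2t)=\log(1/t)+O(1)$ as $t\to0^+$, the exact formula gives $M(t)/(\log(e+1/t))^{1/2}\to\frac{1}{2\sqrt{\pi}}$.

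The trade-off is that this exact identification relies on the Hilbert structure of $H^1(\mathbb{R}^2)$, where the Cauchy--Schwarz extremizer is explicit. The paper's construction is a more hands-on witness of the logarithmic blow-up that does not depend on finding an extremizer.
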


Let us mention the motivation for these theorems. The second and fourth authors \cite{OT24} recently investigated the standard Gagliardo-Nirenberg and Sobolev inequalities:
$$\|f\|_{L^p(\mathbb{R}^n)} \le C\|f\|_{L^q(\mathbb{R}^n)}^{1-\sigma}\|\nabla f\|_{L^r(\mathbb{R}^n)}^{\sigma}, \quad \|f\|_{L^{2n/(n-2)}(\mathbb{R}^n)} \le C\|\nabla f\|_{L^2(\mathbb{R}^n)},$$
where the first inequality was discussed in the case of $n \ge 1$, $1 \le q \le p \le \infty$, $np/(n+p)<r \le p$, and $\sigma \coloneqq n(1/q-1/p) \{1+n(1/q-1/r)\}^{-1}$ and the second one was in the case of $n \ge 3$. The aim of the preceding paper was to give an alternative proof of these inequalities by using the heat semigroup $\{e^{t\Delta}\}_{t \ge 0}$. The Gagliardo-Nirenberg inequality has been shown by combining the following simple identity
\begin{equation}\label{heatdecomp}
f=e^{t\Delta}f-\int_0^t\Delta e^{\tau\Delta}fd\tau
\end{equation}
with the standard $L^q$-$L^p$ estimate for the heat semigroup
\begin{equation}\label{heatest}
\|\nabla^je^{t\Delta}f\|_{L^p(\mathbb{R}^n)} \le Ct^{-(n/2)(1/q-1/p)-j/2}\|f\|_{L^q(\mathbb{R}^n)}
\end{equation}
for $t>0$, $j \in \{0,1\}$, and $1 \le q \le p \le \infty$ (cf.~\cite{GGS}*{Theorem, p.8}). However, for the case of the Sobolev inequality, the same method may \textit{no longer} be applicable, and thus another approach via the Hardy inequality has been taken. Whereas the preceding paper \cite{OT24} has been concerned with the Gagliardo-Nirenberg and Sobolev inequalities, the present paper aims to investigate the standard \textit{Brezis-Gallou\"et inequality}:
\begin{equation}\label{BG}
\|f\|_{L^{\infty}(\mathbb{R}^2)} \le C\|f\|_{H^1(\mathbb{R}^2)}\left\{1+\left(\log\left(1+\frac{\|f\|_{H^2(\mathbb{R}^2)}}{\|f\|_{H^1(\mathbb{R}^2)}}\right)\right)^{1/2}\right\}.
\end{equation}
Namely, our eventual goal is to provide its proof based on the heat semigroup $\{e^{t\Delta}\}_{t \ge 0}$ approach. Similarly to the case of the Sobolev inequality, we have difficulty in the proof from the simple combination of \eqref{heatdecomp} and \eqref{heatest}; one might expect that the appearance of the logarithmic function in \eqref{BG} is due to the $\tau^{-1}$-singularity appearing in \eqref{heatdecomp}. In other words, we calculate like
$$\left\|\int_1^t\Delta e^{\tau\Delta}fd\tau\right\|_{L^{\infty}(\mathbb{R}^2)} \le C\int_1^t\tau^{-1}\|\nabla f\|_{L^2(\mathbb{R}^2)}d\tau \le C(\log t)\|f\|_{H^1(\mathbb{R}^2)}$$
for $t>1$ by relying on \eqref{heatest} with $j=1$, $p=\infty$, and $q=2$. However, such an estimate is valid only for $t>1$; the same estimate does \textit{not} work near $t=0$. Next, we consider another way, namely, we simply combine \eqref{heatdecomp} and \eqref{heatest} again to observe that
\begin{equation}
\begin{aligned}
\|f\|_{L^{\infty}(\mathbb{R}^2)} &\le \|e^{t\Delta}f\|_{L^{\infty}(\mathbb{R}^2)}+\int_0^t\|e^{\tau\Delta}\Delta f\|_{L^{\infty}(\mathbb{R}^2)}d\tau \\
&\le \|e^{t\Delta}f\|_{L^{\infty}(\mathbb{R}^2)}+\int_0^tC\tau^{-1/2}\|\Delta f\|_{L^2(\mathbb{R}^2)}d\tau \\
&\le \|e^{t\Delta}f\|_{L^{\infty}(\mathbb{R}^2)}+Ct^{1/2}\|f\|_{H^2(\mathbb{R}^2)}
\end{aligned}
\end{equation}
for all $t>0$. Even in this case, we find that the estimate \eqref{heatest} is \textit{insufficient} to obtain \eqref{BG}. However, if we assume that the estimate in Theorem \ref{CSE} holds true, then we have
$$\|e^{t\Delta}f\|_{L^{\infty}(\mathbb{R}^2)} \le C(\log(e+1/t))^{1/2}\|f\|_{H^1(\mathbb{R}^2)}.$$
Thus we can show that
$$\|f\|_{L^{\infty}(\mathbb{R}^2)} \le C\|f\|_{H^1(\mathbb{R}^2)}\left\{(\log(e+1/t))^{1/2}+t^{1/2}\frac{\|f\|_{H^2(\mathbb{R}^2)}}{\|f\|_{H^1(\mathbb{R}^2)}}\right\},$$
which leads to \eqref{BG} with a different constant $C>0$ by letting $t=\|f\|_{H^1(\mathbb{R}^2)}^2\|f\|_{H^2(\mathbb{R}^2)}^{-2}$. This means that Theorem \ref{CSE}, which is our first result, provides a proof of the Brezis-Gallou\"et inequality \eqref{BG} through the heat semigroup $\{e^{t\Delta}\}_{t \ge 0}$.

\subsection{Several remarks on our main results}

Here we shall state some of the remarks:

\begin{enumerate}
\item It is known that the Brezis-Gallou\"et inequality \eqref{BG} may be regarded as a \textit{critical} Sobolev embedding; as the Sobolev embedding yields $H^s(\mathbb{R}^n) \subset L^{\infty}(\mathbb{R}^n)$ whenever $s>n/2$, letting $n=2$ implies that $H^s(\mathbb{R}^2) \subset L^{\infty}(\mathbb{R}^2)$ for $s>1$ but $H^1(\mathbb{R}^2) \not\subset L^{\infty}(\mathbb{R}^2)$. Although \eqref{BG} requires the assumption $f \in H^2(\mathbb{R}^2)$, the dependence of the growth for $\|f\|_{H^2(\mathbb{R}^2)} \gg 1$ is at most $O((\log \|f\|_{H^2(\mathbb{R}^2)})^{1/2})$. As well as \eqref{BG}, our estimate in Theorem \ref{CSE} states that we may no longer expect that $e^{t\Delta}f \in L^{\infty}(\mathbb{R}^2)$ is still valid as $t \to 0^+$ due to the critical Sobolev (non-)embedding. However, such a time-singularity is given at most $O((\log(1/t))^{1/2})$.

\item A certain (numerical) constant $C>0$ appears in the estimate of Theorem \ref{CSE}. In fact, we will observe that $C>0$ is bounded from above by $(2\pi)^{-1/2}$, which does not seem to be the best constant; the details will be discussed in Subsection \ref{proof3} below.

\item Although we do not find the best constant, we may observe that the time-singularity of the estimate in Theorem \ref{CSE} as $t \to 0^+$ is indeed \textit{optimal}. Such an assertion is given as Theorem \ref{optCSE}. We should remark that, in the same vein, Brezis and Wainger \cite{BW80}*{Remark 2} have verified the optimality of the power $1-1/p$ appearing in \eqref{BW} below.
\end{enumerate}

\subsection{Previous works related to our results}

We shall summarize the previous works on the Brezis-Gallou\"et inequality \eqref{BG}: The original inequality was established by Brezis and Gallou\"et \cite{BG80} in 1980 in constructing global solutions of the 2D nonlinear Schr\"odinger equations. Because of the \textit{slow} growth of the $H^2$-norm in \eqref{BG}, they achieved controlling the nonlinear term $|u|^2u$ and obtaining the uniform a priori estimate in the $H^2$-framework. Brezis and Wainger \cite{BW80} generalized the original inequality \eqref{BG} through the fractional Sobolev spaces $H^{s,q}(\mathbb{R}^n)$; they showed that
\begin{equation}\label{BW}
\|f\|_{L^{\infty}(\mathbb{R}^n)} \le C\|f\|_{H^{n/p,p}(\mathbb{R}^n)}\left\{1+\left(\log\left(1+\frac{\|f\|_{H^{s,q}(\mathbb{R}^n)}}{\|f\|_{H^{n/p,p}(\mathbb{R}^n)}}\right)\right)^{1-1/p}\right\}
\end{equation}
for all $1<p<\infty$, $1 \le q \le \infty$, and $s>n/q$. The assumption $s>n/q$ is crucial as it guarantees $H^{s,q}(\mathbb{R}^n) \subset L^{\infty}(\mathbb{R}^n)$. Beale, Kato, and Majda \cite{BKM} treated the 3D Euler equations differing from the 2D nonlinear Schr\"odinger equations; via showing the inequality
\begin{equation}\label{rotest}
\|\nabla f\|_{L^{\infty}(\mathbb{R}^3)} \le C\Big\{1+\|\mathrm{rot} \, f\|_{L^2(\mathbb{R}^3)}+\|\mathrm{rot} \, f\|_{L^{\infty}(\mathbb{R}^3)}\log(e+\|f\|_{H^3(\mathbb{R}^3)})\Big\},
\end{equation}
which is similar to \eqref{BW}, they obtained the criterion of whether the local-in-time solution may be extended. Engler \cite{HE89} gave another proof of the inequality \eqref{BW}. In particular, the method of \cite{HE89} provides a space-localized version of \eqref{BW}, which simply leads to the standard case \eqref{BW}. Based on the idea of \cite{HE89}, the second author \cite{TO95} also gave another proof of \eqref{BW}; compared with \cite{HE89}, the result of \cite{TO95} is still valid even for the fractional Sobolev spaces. Kozono and Taniuchi \cite{KT00} refined \eqref{rotest} to establish
\begin{equation}\label{rotest2}
\|f\|_{L^{\infty}(\mathbb{R}^n)} \le C\Big\{1+\|f\|_{BMO(\mathbb{R}^n)}\log(e+\|f\|_{H^{s,p}(\mathbb{R}^n)})\Big\}
\end{equation}
for $1<p<\infty$ and $s>n/p$, where $BMO(\mathbb{R}^n)$ denotes the space of functions of bounded mean oscillation. Applying it to the Euler equations, they also improved the result of \cite{BKM}. The further generalizations through the Besov-type spaces have been given by, e.g., Kozono, Ogawa, and Taniuchi \cite{KOTB}, Chae \cite{DC02}, Ogawa \cite{TO03}, and Ogawa and Taniuchi \cite{OT04}. For the case of the more involved function spaces, we refer to the result of Nakao and Taniuchi \cite{NT18}. We also remark that there is another development for the classical result of Brezis and Gallou\"et \cite{BG80}; the second author and Visciglia \cite{OV16} extended the global existence result for the 2D nonlinear Schr\"odinger equations in \cite{BG80} by replacing the nonlinear term $|u|^2u$ with $|u|^3u$. The method is based \textit{not} on the refined version of \eqref{BG} but on the introduction of the \textit{new} energy structure and quite precise calculations.

Next, we mention the works on the semigroup approach for the Sobolev-type inequalities: As stated above, while the second and fourth authors \cite{OT24} investigated the Gagliardo-Nirenberg and Sobolev inequalities, this paper focuses on the Brezis-Gallou\"et inequality \eqref{BG}. We have aimed to take the heat semigroup approach to show these inequalities. It should be noted that the semigroup approach has been taken previously. For instance, Carlen, Kusuoka, and Stroock \cite{CKS} showed that the $L^q$-$L^p$ estimate for the heat semigroup \eqref{heatest} and the Nash inequality \cite{JN58}:
$$\|f\|_{L^2(\mathbb{R}^n)} \le C\|f\|_{L^1(\mathbb{R}^n)}^{2/(n+2)}\|\nabla f\|_{L^2(\mathbb{R}^n)}^{1-2/(n+2)},$$
which is a special case of the Gagliardo-Nirenberg inequality, are equivalent. Varopoulos, Saloff-Coste, and Coulhon \cite{VSC}*{II.3.3 Remarks (a)} obtained the Nash inequality via the semigroup estimate as well (see also \cite{LS02}*{Section 4.1}). Concerning the case of the Gagliardo-Nirenberg or Sobolev inequality, we refer to, e.g., Varopoulos \cite{NV85}, Maremonti \cite{PM98}*{Theorem 2.2}, Giga, Giga, and Saal \cite{GGS}*{Theorem, p.190}, and Bakry, Gentil, and Ledoux \cite{BGL}*{Section 6.3}.

Finally, we also refer to some works on Sobolev-type inequalities in a general domain $\Omega \subset \mathbb{R}^n$ or metric space: Although we fail to include every corner of the previous efforts because of the numerous results, we mention that Maz'ya \cite{VM85}*{Corollary, p.169 and Theorem 2, p.211} obtained the necessary and sufficient conditions on a domain $\Omega$ for the validity of the Sobolev-type embeddings (see also \cite{VM60}). Brezis and Lieb \cite{BL85} considered a bounded domain $\Omega$ and gave the Sobolev inequality with an extra (boundary integral) term
$$S_n^{1/2}\|f\|_{L^{2n/(n-2)}(\Omega)} \le \|\nabla f\|_{L^2(\Omega)}+C\|f\|_{L^p(\partial\Omega)}$$
for $n \ge 3$ with $p \coloneqq 2(n-1)/(n-2)$, where $S_n>0$ denotes the best Sobolev embedding constant given by Aubin \cite{TA76} and Talenti \cite{GT76}. They \cite{BL85} also gave several variants of the above inequality. For the Sobolev-type inequalities on irregular domains, we refer to the works of, e.g., Haj{\l}asz and Koskela \cite{HK98}, Kilpel\"ainen and Mal\'y \cite{KM00}, and Cianchi and Maz'ya \cite{CM16}. Concerning the case of a Riemannian manifold $M$, Rothaus \cite{OR85}*{Theorem 1} gave the necessary and sufficient conditions on $M$ for the validity of a certain estimate related to the Sobolev-type embedding (see also \cite{LS02}*{Theorem 3.1.2}). Saloff-Coste \cite{LS92}*{Theorem 10.3} applied the result of Varopoulos \cite{NV85} to show that
$$\|f\|_{L^{np/(n-p)}(B(x,r))} \le e^{C(1+\sqrt{K(x,r)}r)}V(x,r)^{-1/n}\Big(r\|\nabla f\|_{L^p(B(x,r))}+\|f\|_{L^p(B(x,r))} \Big)$$
for all $1 \le p <n$, where $B(x,r) \subset M$, $V(x,r)>0$, and $K(x,r)>0$ denote a ball centered at $x \in M$ with a radius $r>0$, its volume, and the lower bound of the Ricci curvature in $B(x,2r)$, respectively. For further details, we refer to the books of Varopoulos, Saloff-Coste, and Coulhon \cite{VSC}*{Section IV.7 and Chapter IX}, Hebey \cite{EH99}*{Chapters 2 and 3}, and Saloff-Coste \cite{LS02}*{Chapter 3}. In addition, Haj{\l}asz \cite{PH96} extended the Sobolev spaces by means of the metric space and investigated their embedding properties. Bobkov and Houdr\'e \cite{BH97}*{Theorem 1.1} obtained the metric space version of \cite{OR85}*{Theorem 1}. Last but not least, we also mention the result of Bahouri, Fermanian-Kammerer, and Gallagher \cite{BFG} who gave the Sobolev-type inequalities in the Besov spaces on the graded Lie groups.

\section{Several approaches to the proof of Theorem \ref{CSE}}

\subsection{Proof by relying on the results in the Besov spaces}

We first remark that Theorem \ref{CSE} is actually a simple consequence of the estimate in the Besov spaces framework. Recall the definition of the Besov spaces briefly here; let $\varphi \in \mathscr{S}(\mathbb{R}^2)$ satisfy $\mathrm{supp} \, \varphi=\{\xi \in \mathbb{R}^2 \, | \, 1/2 \le |\xi| \le 2\}$, $\varphi(\xi)>0$ for $1/2<|\xi|<2$, and $\sum_{j \in \mathbb{Z}}\varphi(2^{-j}\xi)=1$ for all $\xi \in \mathbb{R}^2 \setminus \{0\}$. We moreover set $\psi(\xi) \coloneqq 1-\sum_{j \in \mathbb{N}}\varphi(2^{-j}\xi)$ for $\xi \in \mathbb{R}^2$ and define
$$\|f\|_{B_{p,q}^s(\mathbb{R}^2)} \coloneqq \|(\mathcal{F}^{-1}\psi)*f\|_{L^p(\mathbb{R}^2)}+\|\{2^{sj}\|\mathcal{F}^{-1}[\varphi(2^{-j}\cdot)]*f\|_{L^p(\mathbb{R}^2)}\}_{j \in \mathbb{N}}\|_{l^q(\mathbb{N})}$$
for $1 \le p \le \infty$, $s \in \mathbb{R}$, and $1 \le q \le \infty$. The Besov spaces $B_{p,q}^s(\mathbb{R}^2)$ are defined as the subspaces of the tempered distribution $\mathscr{S}'(\mathbb{R}^2)$ consisting of those functions $f$ such that $\|f\|_{B_{p,q}^s(\mathbb{R}^2)}<\infty$. For more details on the Besov spaces, we refer to the books written by Bergh and L\"ofstr\"om \cite{BL76}*{Section 6.2} and Bahouri, Chemin, and Danchin \cite{BCD}*{Section 2.7}.

\begin{proof}[Proof of Theorem \ref{CSE}]
We note the following embedding properties for the Besov spaces \cite{BL76}*{Theorems 6.2.4 and 6.5.1}:
$$B_{2,1}^1(\mathbb{R}^2) \subset B_{\infty,1}^0(\mathbb{R}^2) \subset L^{\infty}(\mathbb{R}^2).$$
Hence, we recall the result shown by Kozono, Ogawa, and Taniuchi \cite{KOT}*{Lemma 2.2 (i)} to obtain
$$\|e^{t\Delta}f\|_{L^{\infty}(\mathbb{R}^2)} \le C\|e^{t\Delta}f\|_{B_{2,1}^1(\mathbb{R}^2)} \le C\log(e+1/t)\|f\|_{B_{2,\infty}^1(\mathbb{R}^2)}$$
for all $t>0$ and $f \in B_{2,\infty}^1(\mathbb{R}^2)$, while we have
$$\|e^{t\Delta}f\|_{L^{\infty}(\mathbb{R}^2)} \le \|f\|_{L^{\infty}(\mathbb{R}^2)} \le C\|f\|_{B_{2,1}^1(\mathbb{R}^2)}$$
for all $t>0$ and $f \in B_{2,1}^1(\mathbb{R}^2)$. Since the real interpolation space of the Besov spaces is given by
$$(B_{2,\infty}^1(\mathbb{R}^2),B_{2,1}^1(\mathbb{R}^2))_{1/2,2}=B_{2,2}^1(\mathbb{R}^2)=H^1(\mathbb{R}^2)$$
due to \cite{BL76}*{Theorems 6.4.4 and 6.4.5}, the standard property of the real interpolation \cite{AL09}*{Theorem 1.6} provides the desired estimate.
\end{proof}

As one may see, although we may show Theorem \ref{CSE} by combining the known properties, the above method is based on the theory of Besov spaces and the real interpolation technique. In the introduction, we have stated that our actual goal is to show the Brezis-Gallou\"et inequality \eqref{BG} by a simple approach. However, the above proof relies on the more \textit{advanced} methods in comparison with a direct proof of \eqref{BG}; for such a reason, hereafter we give a direct proof of Theorem \ref{CSE} \textit{without} relying on any advanced results or involved notions.

\subsection{Proof based on the dyadic decomposition approach}

One of the methods for a direct proof of Theorem \ref{CSE} is the dyadic decomposition. Similarly to the proof of \cite{KOT}*{Lemma 2.1 (i)}, we split the summation suitably to derive the logarithmic singularity. Our proof also shares the same spirit as the known proof of the Brezis-Gallou\"et inequality \cite{KOTB}*{Theorem 2.1 (1)}. However, while the original argument relies on the Bernstein inequality \cite{BCD}*{Lemma 2.1}, our approach leverages heat semigroup estimates as the key ingredient. In addition, as we will use only the Sobolev space $H^1(\mathbb{R}^2)$ instead of the Besov-type space, we may reveal an \textit{explicit} bound of the constant $C>0$ appearing in the estimate of Theorem \ref{CSE}.

\begin{proof}[Proof of Theorem \ref{CSE}]
Define the sequence $\{\chi_j\}_{j \ge 0} \subset L^{\infty}(0,\infty)$ of functions by setting
$$\chi_0 \coloneqq \left\{\begin{array}{cl}
1 & \text{on $(0,1)$}, \\
0 & \text{on $[1,\infty)$},
\end{array}\right. \quad \chi_j \coloneqq \left\{\begin{array}{cl}
1 & \text{on $(2^{j-1},2^j)$}, \\
0 & \text{on $(0,2^{j-1}] \cup [2^j,\infty)$}
\end{array}\right.$$
for $j \in \mathbb{N}$. Since
\begin{equation}\label{decomp}
\begin{aligned}
e^{t\Delta}f &=\mathcal{F}^{-1}\left[\sum_{j \ge 0}\chi_j(|\cdot|)e^{-t|\cdot|^2}\mathcal{F}f\right] \\
&=\sum_{j=0}^N\mathcal{F}^{-1}[\chi_j(|\cdot|)^2e^{-t|\cdot|^2}\mathcal{F}f]+\mathcal{F}^{-1}\left[\sum_{j=N+1}^{\infty}\chi_j(|\cdot|)e^{-t|\cdot|^2}\mathcal{F}f\right]
\end{aligned}
\end{equation}
for all $t>0$ and $N \in \mathbb{N}$ due to $\chi_j=\chi_j^2$ and since the Cauchy-Schwarz inequality implies that
\begin{equation}
\begin{aligned}
&\left\|\mathcal{F}^{-1}\left[\sum_{j=N+1}^{\infty}\chi_j(|\cdot|)e^{-t|\cdot|^2}\mathcal{F}f\right]\right\|_{L^{\infty}(\mathbb{R}^2)} \\
&\le \frac{1}{2\pi}\left\|\sum_{j=N+1}^{\infty}\chi_j(|\cdot|)e^{-t|\cdot|^2}\mathcal{F}f\right\|_{L^1(\mathbb{R}^2)} \\
&\le \frac{1}{2\pi}\exp(-2^{2N-1}t)\|e^{-(t/2)|\cdot|^2}\|_{L^2(\mathbb{R}^2)}\|\mathcal{F}f\|_{L^2(\mathbb{R}^2)}
\end{aligned}
\end{equation}
for all $t>0$ and $N \in \mathbb{N}$, by taking the $L^{\infty}(\mathbb{R}^2)$-norm and letting $N \to \infty$ in the estimate \eqref{decomp}, we observe that
\begin{equation}
\begin{aligned}
\|e^{t\Delta}f\|_{L^{\infty}(\mathbb{R}^2)} &\le \sum_{j \ge 0}\|\mathcal{F}^{-1}[\chi_j(|\cdot|)^2e^{-t|\cdot|^2}\mathcal{F}f]\|_{L^{\infty}(\mathbb{R}^2)} \\
&\le \frac{1}{2\pi}\sum_{j \ge 0}\|\chi_j(|\cdot|)^2e^{-t|\cdot|^2}\mathcal{F}f\|_{L^1(\mathbb{R}^2)} \\
&\le \frac{1}{2\pi}\sum_{j \ge 0}\|\chi_j(|\cdot|)\|_{L^2(\mathbb{R}^2)}\|\chi_j(|\cdot|)e^{-t|\cdot|^2}\mathcal{F}f\|_{L^2(\mathbb{R}^2)}
\end{aligned}
\end{equation}
with the aid of the Cauchy-Schwarz inequality. In addition, noting that
\begin{equation}
\begin{alignedat}{4}
\|\chi_0(|\cdot|)\|_{L^2(\mathbb{R}^2)}^2 &=\int_{|\xi|<1}d\xi &&=2\pi \int_0^1rdr &&= \pi, \\
\|\chi_j(|\cdot|)\|_{L^2(\mathbb{R}^2)}^2 &=\int_{2^{j-1} \le |\xi|<2^j}d\xi &&=2\pi \int_{2^{j-1}}^{2^j}rdr &&\le \pi \cdot 2^{2j}
\end{alignedat}
\end{equation}
for all $j \in \mathbb{N}$, we obtain
\begin{equation}\label{preest}
\|e^{t\Delta}f\|_{L^{\infty}(\mathbb{R}^2)} \le \frac{1}{2\pi}\sum_{j \ge 0}\sqrt{\pi \cdot 2^{2j}} \cdot \|\chi_j(|\cdot|)e^{-t|\cdot|^2}\mathcal{F}f\|_{L^2(\mathbb{R}^2)}.
\end{equation}

As there holds
\begin{equation}\label{auxest}
\begin{aligned}
\sum_{j \in \mathbb{N}}2^{2\kappa j}\|\chi_j(|\cdot|)|\cdot|^{1-\kappa}\mathcal{F}f\|_{L^2(\mathbb{R}^2)}^2 &=\sum_{j \in \mathbb{N}}2^{2\kappa j}\int_{2^{j-1} \le |\xi|<2^j}|\xi|^{2-2\kappa}|(\mathcal{F}f)(\xi)|^2d\xi \\
&\le 2^{2\kappa}\sum_{j \in \mathbb{N}}\int_{2^{j-1} \le |\xi|<2^j}|\xi|^2|(\mathcal{F}f)(\xi)|^2d\xi \\
&\le 2^{2\kappa}\||\cdot|\mathcal{F}f\|_{L^2(\mathbb{R}^2)}^2
\end{aligned}
\end{equation}
for all $\kappa \ge 0$, we rely on the estimate \eqref{auxest} with $\kappa=1$ to deduce that
\begin{equation}
\begin{aligned}
\sum_{j=1}^N2^j\|\chi_j(|\cdot|)e^{-t|\cdot|^2}\mathcal{F}f\|_{L^2(\mathbb{R}^2)} &\le \sqrt{N}\left(\sum_{j=1}^N2^{2j}\|\chi_j(|\cdot|)\mathcal{F}f\|_{L^2(\mathbb{R}^2)}^2\right)^{1/2} \\
&\le 2\sqrt{N}\||\cdot|\mathcal{F}f\|_{L^2(\mathbb{R}^2)}
\end{aligned}
\end{equation}
for all $t>0$ and $N \in \mathbb{N}$. We also combine the simple fact $\max_{\tau>0}\tau e^{-\tau^2}=(2e)^{-1/2}$ and the estimate \eqref{auxest} with $\kappa=2$ to obtain
\begin{equation}
\begin{aligned}
&\sum_{j=N+1}^{\infty}2^j\|\chi_j(|\cdot|)e^{-t|\cdot|^2}\mathcal{F}f\|_{L^2(\mathbb{R}^2)} \\
&\le \left(\sum_{j=N+1}^{\infty}2^{-2j}\right)^{1/2}\left(\sum_{j=N+1}^{\infty}2^{4j}\|\chi_j(|\cdot|)e^{-t|\cdot|^2}\mathcal{F}f\|_{L^2(\mathbb{R}^2)}^2\right)^{1/2} \\
&\le 2^{-N}\left(\frac{1}{2e}\sum_{j=N+1}^{\infty}2^{4j}\|\chi_j(|\cdot|)(t^{-1/2}|\cdot|^{-1})\mathcal{F}f\|_{L^2(\mathbb{R}^2)}^2\right)^{1/2} \\
&\le 2^{-N+1}t^{-1/2}\||\cdot|\mathcal{F}f\|_{L^2(\mathbb{R}^2)}
\end{aligned}
\end{equation}
for all $t>0$ and $N \in \mathbb{N}$. Hence, we may estimate \eqref{preest} as
\begin{equation}
\begin{aligned}
\|e^{t\Delta}f\|_{L^{\infty}(\mathbb{R}^2)} &\le \frac{1}{2\sqrt{\pi}}\left( \|\chi_0(|\cdot|)e^{-t|\cdot|^2}\mathcal{F}f\|_{L^2(\mathbb{R}^2)}+\sum_{j \in \mathbb{N}}2^j\|\chi_j(|\cdot|)e^{-t|\cdot|^2}\mathcal{F}f\|_{L^2(\mathbb{R}^2)} \right) \\
&\le \frac{1}{2\sqrt{\pi}}\left( \|\mathcal{F}f\|_{L^2(\mathbb{R}^2)}+2\sqrt{N}\||\cdot|\mathcal{F}f\|_{L^2(\mathbb{R}^2)}+2^{-N+1}t^{-1/2}\||\cdot|\mathcal{F}f\|_{L^2(\mathbb{R}^2)}\right) \\
&\le \frac{2}{\sqrt{\pi}}(\sqrt{N}+2^{-N}t^{-1/2})\|f\|_{H^1(\mathbb{R}^2)}
\end{aligned}
\end{equation}
for all $t>0$ and $N \in \mathbb{N}$.

We now suppose that $t \ge 1$. Then, taking $N=1$ implies the desired estimate. Next, we suppose that $0<t<1$. Taking $N \in \mathbb{N}$ so that
$$\frac{\log t^{-1/2}}{\log 2} \le N \le 1+\frac{\log t^{-1/2}}{\log 2},$$
we observe that $t^{-1/2} \le 2^N$ and $N \le 1+\log(1/t) \le 2\log(e+1/t)$, which lead to
\begin{equation}
\begin{aligned}
\|e^{t\Delta}f\|_{L^{\infty}(\mathbb{R}^2)} &\le \frac{2}{\sqrt{\pi}}\left\{\sqrt{2}(\log(e+1/t))^{1/2}+1\right\}\|f\|_{H^1(\mathbb{R}^2)} \\
&\le 4(\log(e+1/t))^{1/2}\|f\|_{H^1(\mathbb{R}^2)}.
\end{aligned}
\end{equation}
This completes the proof of Theorem \ref{CSE}.
\end{proof}

\subsection{Proof based on the simpler direct computation}
\label{proof3}

The authors realized the \textit{simpler} approach after giving an alternative proof of Theorem \ref{CSE} above. In addition, this method might give a sharp estimate. The following result is essential to complete the proof of Theorem \ref{CSE}:

\begin{prop}\label{preCSE}
There holds
$$\|e^{t\Delta}f\|_{L^{\infty}(\mathbb{R}^2)} \le \frac{e^t}{2\sqrt{\pi}}(E_1(2t))^{1/2}\|f\|_{H^1(\mathbb{R}^2)}$$
for all $t>0$ and $f \in H^1(\mathbb{R}^2)$, where $E_1$ denotes the exponential integral defined by
$$E_1(\tau) \coloneqq \int_{\tau}^{\infty}\frac{e^{-\rho}}{\rho}d\rho, \quad \tau>0.$$
\end{prop}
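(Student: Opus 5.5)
Since $e^{t\Delta}f=\mathcal{F}^{-1}[e^{-t|\cdot|^2}\mathcal{F}f]$, the plan is to estimate the $L^\infty$-norm by the $L^1$-norm on the Fourier side and then apply Cauchy--Schwarz with the $H^1$-weight inserted. Concretely, for $f\in H^1(\mathbb{R}^2)$ we have $e^{-t|\cdot|^2}\mathcal{F}f\in L^1(\mathbb{R}^2)$, so pointwise
$$|(e^{t\Delta}f)(x)|\le \frac{1}{2\pi}\int_{\mathbb{R}^2}e^{-t|\xi|^2}|(\mathcal{F}f)(\xi)|\,d\xi.$$
Writing $e^{-t|\xi|^2}|\mathcal{F}f(\xi)|=\bigl(e^{-t|\xi|^2}(1+|\xi|^2)^{-1/2}\bigr)\bigl((1+|\xi|^2)^{1/2}|\mathcal{F}f(\xi)|\bigr)$ and applying Cauchy--Schwarz yields
$$\|e^{t\Delta}f\|_{L^\infty(\mathbb{R}^2)}\le \frac{1}{2\pi}\left(\int_{\mathbb{R}^2}\frac{e^{-2t|\xi|^2}}{1+|\xi|^2}\,d\xi\right)^{1/2}\|f\|_{H^1(\mathbb{R}^2)}.$$

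The remaining step is to compute the weight integral exactly. In polar coordinates, followed by the substitution $s=r^2$,
$$\int_{\mathbb{R}^2}\frac{e^{-2t|\xi|^2}}{1+|\xi|^2}\,d\xi=2\pi\int_0^\infty\frac{e^{-2tr^2}}{1+r^2}\,r\,dr=\pi\int_0^\infty\frac{e^{-2ts}}{1+s}\,ds.$$
Setting $\rho=2t(1+s)$ gives
$$\int_0^\infty\frac{e^{-2ts}}{1+s}\,ds=e^{2t}\int_{2t}^\infty\frac{e^{-\rho}}{\rho}\,d\rho=e^{2t}E_1(2t).$$
Therefore
$$\|e^{t\Delta}f\|_{L^\infty(\mathbb{R}^2)}\le\frac{1}{2\pi}\bigl(\pi e^{2t}E_1(2t)\bigr)^{1/2}\|f\|_{H^1(\mathbb{R}^2)}=\frac{e^t}{2\sqrt{\pi}}(E_1(2t))^{1/2}\|f\|_{H^1(\mathbb{R}^2)},$$
which is exactly the claimed bound.

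There is no serious obstacle here. The only points requiring care are the normalization constant $1/(2\pi)$ in $\mathcal{F}^{-1}$, and the justification that the $L^\infty$ bound holds everywhere, which follows since the Fourier side is integrable and so $e^{t\Delta}f$ is continuous and given pointwise by the integral. Everything else is a change of variables.
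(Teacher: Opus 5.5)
Your proof is correct and is essentially the paper's argument. The paper uses the same splitting $e^{-t|\xi|^2}\mathcal{F}f=\bigl(e^{-t|\xi|^2}(1+|\xi|^2)^{-1/2}\bigr)\bigl((1+|\xi|^2)^{1/2}\mathcal{F}f\bigr)$, but writes $e^{t\Delta}f$ as a convolution via \eqref{conv} and applies Young's inequality $L^2*L^2\subset L^\infty$ with Plancherel, which is your Fourier-side Cauchy--Schwarz moved to physical space; your weight integral $\pi e^{2t}E_1(2t)$ and the constant $e^t/(2\sqrt{\pi})$ agree with the paper.
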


\begin{proof}
As we have
\begin{equation}
\begin{aligned}
e^{t\Delta}f &=\mathcal{F}^{-1}[(1+|\cdot|^2)^{-1/2}e^{-t|\cdot|^2}(1+|\cdot|^2)^{1/2}\mathcal{F}f] \\
&=\frac{1}{2\pi}\mathcal{F}^{-1}[(1+|\cdot|^2)^{-1/2}e^{-t|\cdot|^2}]*\mathcal{F}^{-1}[(1+|\cdot|^2)^{1/2}\mathcal{F}f]
\end{aligned}
\end{equation}
for all $t>0$ due to \eqref{conv}, the Young convolution inequality and \eqref{PI} imply that
\begin{equation}
\begin{aligned}
\|e^{t\Delta}f\|_{L^{\infty}(\mathbb{R}^2)}^2 &\le \frac{1}{(2\pi)^2}\|\mathcal{F}^{-1}[(1+|\cdot|^2)^{-1/2}e^{-t|\cdot|^2}]\|_{L^2(\mathbb{R}^2)}^2\|\mathcal{F}^{-1}[(1+|\cdot|^2)^{1/2}\mathcal{F}f]\|_{L^2(\mathbb{R}^2)}^2 \\
&\le \frac{1}{(2\pi)^2}\int_{\mathbb{R}^2}\frac{e^{-2t|\xi|^2}}{1+|\xi|^2}d\xi \cdot \|f\|_{H^1(\mathbb{R}^2)}^2.
\end{aligned}
\end{equation}
Since the substitution $\rho=2t(1+r^2)$ yields $dr/d\rho=1/(4tr)$, we deduce that
\begin{equation}
\begin{aligned}
\frac{1}{(2\pi)^2}\int_{\mathbb{R}^2}\frac{e^{-2t|\xi|^2}}{1+|\xi|^2}d\xi &=\frac{1}{2\pi}\int_0^{\infty}\frac{e^{-2tr^2}}{1+r^2} \cdot rdr \\
&=\frac{e^{2t}}{2\pi}\int_{2t}^{\infty}e^{-2t(1+r^2)} \cdot \frac{2t}{\rho} \cdot r\frac{dr}{d\rho}d\rho \\
&=\frac{e^{2t}}{4\pi}\int_{2t}^{\infty}\frac{e^{-\rho}}{\rho}d\rho=\frac{e^{2t}}{4\pi}E_1(2t)
\end{aligned}
\end{equation}
for all $t>0$, which leads to the desired estimate.
\end{proof}

The above method relies \textit{only} on the direct calculation without extra estimation steps. Hence, it is expected that the estimate in Proposition \ref{preCSE} might be sharp. However, we have to use the notion of the exponential integral $E_1$ that may \textit{not} be represented by the elementary functions. Once we show Proposition \ref{preCSE}, the remaining step is just revealing the dependence of the constant on $t>0$. It should be noted that although Theorem \ref{CSE} may be simply obtained by Proposition \ref{preCSE}, the sharpness of the bound $C>0$ would be lost due to the rough estimation for $E_1$.

\begin{proof}[Proof of Theorem \ref{CSE}]
While we have
\begin{equation}\label{E11}
E_1(2t) \le \frac{1}{2t}\int_{2t}^{\infty}e^{-\rho}d\rho=\frac{1}{2t}e^{-2t}
\end{equation}
for all $t>0$, we note $\max_{\tau>0}(e\tau+2)e^{-\tau}=e^{2/e}$ to observe that
\begin{equation}\label{E12}
E_1(2t) \le e^{2/e}\int_{2t}^{\infty}\frac{1}{\rho(e\rho+2)}d\rho=e^{2/e}\left[-\frac{1}{2}\log(e+2/\rho)\right]_{2t}^{\infty} \le \frac{e^{2/e}}{2}\log(e+1/t)
\end{equation}
for all $t>0$. In the case of $t \ge 2/3-1/e$, it holds by $1+\tau+\tau^2/2 \le e^{\tau}$ for $\tau>0$ that $t \ge 2/3-2/5>1/4$, and thus \eqref{E11} yields
$$\frac{e^t}{2\sqrt{\pi}}(E_1(2t))^{1/2}<\frac{e^t}{2\sqrt{\pi}} \cdot \sqrt{2}e^{-t}=\frac{1}{\sqrt{2\pi}}.$$
In the case of $0<t<2/3-1/e$, we see by $e^{\tau} \le (2+\tau)/(2-\tau)$ for $0<\tau<2$ that $e^{2/3}<2$, and thus \eqref{E12} implies that
$$\frac{e^t}{2\sqrt{\pi}}(E_1(2t))^{1/2} \le \frac{e^{2/3-1/e}}{2\sqrt{\pi}} \cdot \frac{e^{1/e}}{\sqrt{2}}(\log(e+1/t))^{1/2}<\frac{1}{\sqrt{2\pi}}(\log(e+1/t))^{1/2}.$$
Therefore, the desired estimate follows with the bound $C \le (2\pi)^{-1/2}$.
\end{proof}

We have shown Proposition \ref{preCSE} to derive Theorem \ref{CSE} and the simplest version of the Brezis-Gallou\"et inequality \eqref{BG}. From its method, we easily observe that the results of Proposition \ref{preCSE} and Theorem \ref{CSE} can be \textit{extended} as follows:

\begin{theo}\label{CSEq}
Let $n \ge 1$, $1 \le q \le 2$, and $s \in \mathbb{R}$. Suppose that $f \in \mathscr{S}'(\mathbb{R}^n)$ satisfies
$$\|f\|_{H^{s,q}(\mathbb{R}^n)} \coloneqq \|\mathcal{F}^{-1}[(1+|\cdot|^2)^{s/2}\mathcal{F}f]\|_{L^q(\mathbb{R}^n)}<\infty.$$
Then it holds that
\begin{equation}
\|e^{t\Delta}f\|_{L^{\infty}(\mathbb{R}^n)} \le \left\{\frac{\omega_{n-1}}{(2\pi)^n}\int_0^{\infty}\frac{r^{n-1}e^{-qtr^2}}{(1+r^2)^{(sq)/2}}dr\right\}^{1/q}\|f\|_{H^{s,q}(\mathbb{R}^n)}
\end{equation}
for all $t>0$, where $\omega_{n-1}>0$ denotes the surface area of the unit ball in $\mathbb{R}^n$. In particular, there holds
\begin{equation}
\|e^{t\Delta}f\|_{L^{\infty}(\mathbb{R}^n)} \le \left[\frac{2\log(e+1/t)}{(4\pi)^{n/2}\Gamma(n/2)}\left\{\frac{1}{\sqrt{e}}+\frac{1}{n}\left(\frac{e}{2q}\right)^{n/2}\right\}\right]^{1/q}\|f\|_{H^{n/q,q}(\mathbb{R}^n)},
\end{equation}
where $\Gamma$ is the gamma function given by $\Gamma(a) \coloneqq \int_0^{\infty}\lambda^{a-1}e^{-\lambda}d\lambda$ for $a>0$.
\end{theo}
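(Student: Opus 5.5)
The plan is to repeat the argument of Proposition \ref{preCSE} in $\mathbb{R}^n$, replacing the pair Cauchy--Schwarz/Plancherel by H\"older and Hausdorff--Young. Set $g \coloneqq \mathcal{F}^{-1}[(1+|\cdot|^2)^{s/2}\mathcal{F}f] \in L^q(\mathbb{R}^n)$ and $m_t(\xi) \coloneqq (1+|\xi|^2)^{-s/2}e^{-t|\xi|^2}$. Then $e^{t\Delta}f = \mathcal{F}^{-1}[m_t\,\mathcal{F}g]$, and with the normalization $(2\pi)^{-n/2}$ in $\mathcal{F}^{-1}$ we get the pointwise bound
$$|(e^{t\Delta}f)(x)| \le (2\pi)^{-n/2}\|m_t\|_{L^q(\mathbb{R}^n)}\|\mathcal{F}g\|_{L^{q'}(\mathbb{R}^n)}, \quad \frac{1}{q}+\frac{1}{q'}=1.$$
Since $1 \le q \le 2$, I would apply the Hausdorff--Young inequality in the form given by Riesz--Thorin, which interpolates $\|\mathcal{F}g\|_{L^\infty} \le (2\pi)^{-n/2}\|g\|_{L^1}$ and \eqref{PI}:
$$\|\mathcal{F}g\|_{L^{q'}} \le (2\pi)^{-n(1/q-1/2)}\|g\|_{L^q}.$$
The powers of $2\pi$ combine to $(2\pi)^{-n/q}$. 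Writing $\|m_t\|_{L^q}^q$ in polar coordinates gives $\omega_{n-1}\int_0^\infty r^{n-1}e^{-qtr^2}(1+r^2)^{-sq/2}\,dr$, which is exactly the first asserted estimate. Babenko--Beckner is not needed, because the constant $1$ from Riesz--Thorin already produces the stated constant. For $f \in \mathscr{S}'$ the identities should first be justified for Schwartz $g$ and then extended by density, or one can note directly that $m_t\mathcal{F}g \in L^1$ by H\"older.

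For the second estimate I would take $s=n/q$, so that the integrand becomes $r^{n-1}e^{-qtr^2}(1+r^2)^{-n/2}$, and use
$$\frac{\omega_{n-1}}{(2\pi)^n}=\frac{2}{(4\pi)^{n/2}\Gamma(n/2)}.$$
It then remains to prove
$$\int_0^\infty \frac{r^{n-1}e^{-qtr^2}}{(1+r^2)^{n/2}}\,dr \le \log(e+1/t)\left\{\frac{1}{\sqrt e}+\frac1n\left(\frac{e}{2q}\right)^{n/2}\right\}.$$
I would split the integral at a radius $r_0$ adapted to $t$, for example $r_0 \sim (qt)^{-1/2}$ when $t$ is small, and at $r_0=1$ otherwise.
\begin{itemize}
\item On the low-frequency part, I would use $r^{n-1}(1+r^2)^{-n/2}\le r^{-1}$ for $r \ge 1$ and the trivial bound $\le r^{n-1}$ for $r\le 1$. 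This produces a term $1/n$ plus a logarithm $\log r_0 \approx \tfrac12\log(1/(qt))$.
\item On the high-frequency part, I would use $(1+r^2)^{-n/2}\le r^{-n}$ to reduce to $\tfrac12E_1(qtr_0^2)$, which is bounded by an absolute constant. Alternatively, I would bound $r^{n}e^{-qtr^2}$ by its maximum $(n/(2eqt))^{n/2}$; this is the likely origin of the factor $(e/(2q))^{n/2}/n$.
\item Finally, I would absorb each constant and each logarithm into $\log(e+1/t)\ge 1$, treating $t\ge 1$ and $t<1$ separately as in the proof of Theorem \ref{CSE} above. The $1/\sqrt e$ presumably comes from an estimate of the type $\max_{\tau>0}\tau e^{-\tau^2}=(2e)^{-1/2}$, or from the $E_1$ bounds \eqref{E11}--\eqref{E12} with $2t$ replaced by $qt$.
\end{itemize}

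The first part is routine. The main obstacle is the elementary but fiddly bookkeeping in the second part: the split point and the two cases must be arranged so that the numerical constants come out exactly as $\frac{1}{\sqrt e}+\frac1n(\frac{e}{2q})^{n/2}$, uniformly in $n$ and $q$. I would first try the split $r_0=\max\{1,(qt)^{-1/2}\}$ and then adjust it, for instance to $r_0=(n/(2qt))^{1/2}$ or $r_0=\sqrt{e/(2qt)}$, until the constants match.
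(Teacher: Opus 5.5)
Your proof of the first estimate is correct. It is the paper's argument in dual form: the paper applies Hausdorff--Young to the multiplier $m_t$ and then Young's convolution inequality against $g$, while you use H\"older on the Fourier side and Hausdorff--Young on $g$. Either way the powers of $2\pi$ combine to $(2\pi)^{-n/q}$.

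The gap is in the second estimate. Its whole content is the explicit constant, and you leave it at ``adjust the split until the constants match.'' The bookkeeping you describe does not work as stated. With your first split $r_0=\max\{1,(qt)^{-1/2}\}$ you get
$$\int_0^\infty\frac{r^{n-1}e^{-qtr^2}}{(1+r^2)^{n/2}}\,dr\le \frac1n+\frac12\log^+\frac{1}{qt}+\frac12E_1(1).$$
Absorbing each term separately into $\log(e+1/t)\ge 1$ gives the coefficient $\frac1n+\frac12+\frac12E_1(1)$. For $q=2$ this exceeds $e^{-1/2}+\frac1n(e/4)^{n/2}$ for every $n$, because $\frac12+\frac12E_1(1)\approx 0.6097>e^{-1/2}\approx 0.6065$ and $(e/4)^{n/2}<1$. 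A finer comparison does rescue this split: minimize $(e^{-1/2}+c)\log(e+u)-\frac12\log u$ over $u=1/t$, and use the term $c=\frac1n(e/(2q))^{n/2}$ when $n\le 3$. But that verification is exactly what is missing. Your guesses about where the constants come from are also off. Bounding $r^ne^{-qtr^2}$ by $(n/(2eqt))^{n/2}$ introduces a factor $t^{-n/2}$, which cannot give a logarithmic bound, and $\max_\tau \tau e^{-\tau^2}$ plays no role.

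The idea you are missing is to split at a $t$-independent radius and put all the $t$-dependence into $E_1$. The paper splits at $\sqrt a$ with $a=e/(2q)$.
\begin{itemize}
\item Inner part: it is at most $\int_0^{\sqrt a}r^{n-1}\,dr=a^{n/2}/n=\frac1n(e/(2q))^{n/2}$, which is the origin of that term; multiply it by $\log(e+1/t)\ge1$.
\item Outer part: $r^{n-1}(1+r^2)^{-n/2}\le r^{-1}$ and the substitution $\rho=qtr^2$ give at most $\frac12E_1(aqt)$.
\item Bound on $E_1$: for $b\le e$, the elementary inequality $(e\rho+b)e^{-\rho}\le e^{b/e}$ gives, exactly as in \eqref{E12}, $E_1(bt)\le b^{-1}e^{b/e}\log(e+1/t)$. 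With $b=aq=e/2$ the outer part is at most $\frac12\cdot\frac2e\cdot e^{1/2}\log(e+1/t)=e^{-1/2}\log(e+1/t)$, which is the source of $1/\sqrt e$.
\end{itemize}
Adding the two pieces gives exactly the stated constant for all $t>0$, with no case distinction in $t$.
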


\begin{proof}
Recall the $n$-dimensional version of \eqref{PI} and $\|\mathcal{F}^{-1}g\|_{L^{\infty}(\mathbb{R}^n)} \le (2\pi)^{-n/2}\|g\|_{L^1(\mathbb{R}^n)}$ to obtain
$$\|\mathcal{F}^{-1}g\|_{L^{q'}(\mathbb{R}^n)} \le \frac{1}{(2\pi)^{n(1/q-1/2)}}\|g\|_{L^q(\mathbb{R}^n)}$$
for all $g \in L^q(\mathbb{R}^n)$ and $1 \le q \le 2$ with the aid of the Riesz-Thorin interpolation theorem \cite{BL76}*{Theorem 1.1.1}, where $1/q+1/q'=1$. Since
$$e^{t\Delta}f=\frac{1}{(2\pi)^{n/2}}\mathcal{F}^{-1}[(1+|\cdot|^2)^{-s/2}e^{-t|\cdot|^2}]*\mathcal{F}^{-1}[(1+|\cdot|^2)^{s/2}\mathcal{F}f]$$
for all $s \in \mathbb{R}$, we see by the Young convolution inequality that
\begin{equation}
\begin{aligned}
\|e^{t\Delta}f\|_{L^{\infty}(\mathbb{R}^n)} &\le \frac{1}{(2\pi)^{n/2}}\|\mathcal{F}^{-1}[(1+|\cdot|^2)^{-s/2}e^{-t|\cdot|^2}]\|_{L^{q'}(\mathbb{R}^n)}\|\mathcal{F}^{-1}[(1+|\cdot|^2)^{s/2}\mathcal{F}f]\|_{L^q(\mathbb{R}^n)} \\
&\le \frac{1}{(2\pi)^{n/q}}\|(1+|\cdot|^2)^{-s/2}e^{-t|\cdot|^2}\|_{L^q(\mathbb{R}^n)}\|f\|_{H^{s,q}(\mathbb{R}^n)}.
\end{aligned}
\end{equation}
Thus the relation
$$\|(1+|\cdot|^2)^{-s/2}e^{-t|\cdot|^2}\|_{L^q(\mathbb{R}^n)}^q=\omega_{n-1}\int_0^{\infty}\frac{e^{-qtr^2}}{(1+r^2)^{(sq)/2}} \cdot r^{n-1}dr$$
yields the first inequality.

The second inequality is obtained by letting $s=n/q$. Indeed, noting that the substitution $\rho=qtr^2$ gives $dr/d\rho=1/(2qtr)$, we have
\begin{equation}
\int_{\sqrt{a}}^{\infty}\frac{r^{n-1}e^{-qtr^2}}{(1+r^2)^{n/2}}dr \le \int_{\sqrt{a}}^{\infty} \frac{e^{-qtr^2}}{r}dr =\int_{aqt}^{\infty}\frac{e^{-\rho}}{r}\frac{dr}{d\rho}d\rho =\frac{1}{2}\int_{aqt}^{\infty}\frac{e^{-\rho}}{\rho}d\rho =\frac{1}{2}E_1(aqt)
\end{equation}
for all $t,a \in (0,\infty)$. In addition, there holds
\begin{equation}
\int_0^{\sqrt{a}}\frac{r^{n-1}e^{-qtr^2}}{(1+r^2)^{n/2}}dr \le \int_0^{\sqrt{a}}r^{n-1}dr=\frac{a^{n/2}}{n}.
\end{equation}
As a similar calculation to the estimate \eqref{E12} leads to $E_1(aqt) \le (aq)^{-1}e^{aq/e}\log(e+1/t)$ for $a \le e/q$, we set $a=e/(2q)$ and use the fact $\omega_{n-1}=2\pi^{n/2}(\Gamma(n/2))^{-1}$ to obtain
\begin{equation}
\begin{aligned}
\frac{\omega_{n-1}}{(2\pi)^n}\int_0^{\infty}\frac{r^{n-1}e^{-qtr^2}}{(1+r^2)^{n/2}}dr  &\le \frac{2\pi^{n/2}}{\Gamma(n/2)} \cdot \frac{1}{(2\pi)^n}\left\{\frac{1}{2} \cdot \frac{2}{e} \cdot \sqrt{e}+\frac{1}{n}\left(\frac{e}{2q}\right)^{n/2}\right\}\log(e+1/t) \\
&=\frac{2\log(e+1/t)}{(4\pi)^{n/2}\Gamma(n/2)}\left\{\frac{1}{\sqrt{e}}+\frac{1}{n}\left(\frac{e}{2q}\right)^{n/2}\right\},
\end{aligned}
\end{equation}
which provides the second inequality.
\end{proof}

We remark that Theorem \ref{CSEq} certainly generalizes the estimate in Theorem \ref{CSE} with the bound $C \le (2\pi)^{-1/2}$. Indeed, we note that $\|f\|_{H^1(\mathbb{R}^2)}=\|f\|_{H^{1,2}(\mathbb{R}^2)}$ due to \eqref{PI}. Moreover, since $e<3$ and since $13/8<\sqrt{e}$ due to $1+\tau+\tau^2/2 \le e^{\tau}$, letting $n=q=2$ provides that
\begin{equation}
\left\{\frac{2}{4\pi \Gamma(1)}\left(\frac{1}{\sqrt{e}}+\frac{1}{2} \cdot \frac{e}{4}\right)\right\}^{1/2}=(2\pi)^{-1/2}\left(\frac{1}{\sqrt{e}}+\frac{e}{8}\right)^{1/2}<(2\pi)^{-1/2}\left(\frac{8}{13}+\frac{3}{8}\right)^{1/2}<(2\pi)^{-1/2}.
\end{equation}

\section{Optimality of the time-singularity: Proof of Theorem \ref{optCSE}}

Finally, we shall show Theorem \ref{optCSE}, which states that our estimate in Theorem \ref{CSE} is \textit{optimal} in the sense that the time-singularity $O((\log(1/t))^{1/2})$ as $t \to 0^+$ \textit{cannot} be improved.

\begin{proof}[Proof of Theorem \ref{optCSE}]
For $\lambda>1$, define the function $\eta_{\lambda} \in L^{\infty}(0,\infty)$ by setting
$$\eta_{\lambda} \coloneqq \left\{\begin{array}{cl}
1 & \text{on $(1,\lambda)$}, \\
0 & \text{on $(0,1] \cap [\lambda,\infty)$}.
\end{array}\right.$$
Then the function $f(\cdot,\lambda) \coloneqq \mathcal{F}^{-1}[|\cdot|^{-2}\eta_{\lambda}(|\cdot|)]$ satisfies
\begin{equation}
\begin{aligned}
\int_{\mathbb{R}^2}(1+|\xi|^2)|\mathcal{F}[f(\cdot,\lambda)](\xi)|^2d\xi &=\int_{\mathbb{R}^2}(1+|\xi|^2)|\xi|^{-4}\eta_{\lambda}(|\xi|)^2d\xi \\
&=2\pi \int_1^{\lambda}(1+r^2)r^{-4} \cdot rdr \\
&\le 4\pi \int_1^{\lambda}\frac{1}{r}dr=4\pi \log\lambda
\end{aligned}
\end{equation}
for all $\lambda>1$, which implies that $f(\cdot,\lambda) \in H^1(\mathbb{R}^2)$ with the estimate
$$\|f(\cdot,\lambda)\|_{H^1(\mathbb{R}^2)} \le 2\sqrt{\pi}(\log \lambda)^{1/2}.$$
Furthermore, as there holds
\begin{equation}
\begin{aligned}
e^{t\Delta}[f(\cdot,\lambda)](x) &=\frac{1}{2\pi}\int_{\mathbb{R}^2}e^{ix\cdot\xi}e^{-t|\xi|^2}\mathcal{F}[f(\cdot,\lambda)](\xi)d\xi \\
&=\frac{1}{2\pi}\int_{\mathbb{R}^2}e^{ix\cdot\xi}e^{-t|\xi|^2}|\xi|^{-2}\eta_{\lambda}(|\xi|)d\xi \\
&=\frac{1}{2\pi}\int_{1<|\xi|<\lambda}e^{ix\cdot\xi}e^{-t|\xi|^2}|\xi|^{-2}d\xi
\end{aligned}
\end{equation}
for all $\lambda>1$, $t>0$, and $x \in \mathbb{R}^2$, we observe that $e^{t\Delta}[f(\cdot,\lambda)](0) \in \mathbb{R}$ with the estimate
\begin{equation}
e^{t\Delta}[f(\cdot,\lambda)](0)=\int_1^{\lambda}e^{-tr^2}r^{-2} \cdot rdr \ge e^{-t\lambda^2}\int_1^{\lambda}\frac{1}{r}dr=e^{-t\lambda^2}\log \lambda.
\end{equation}
Hence, combining the above estimates with the present assumption yields
\begin{equation}
\begin{alignedat}{3}
e^{-t\lambda^2}\log \lambda &\le e^{t\Delta}[f(\cdot,\lambda)](0) &&\le \|e^{t\Delta}[f(\cdot,\lambda)]\|_{L^{\infty}(\mathbb{R}^2)} \\
&\le M(t)\|f(\cdot,\lambda)\|_{H^1(\mathbb{R}^2)} &&\le M(t) \cdot 2\sqrt{\pi}(\log \lambda)^{1/2}
\end{alignedat}
\end{equation}
for all $\lambda>1$ and $t>0$, which leads to
$$M(t) \ge \frac{1}{2\sqrt{\pi}}e^{-t\lambda^2}(\log \lambda)^{1/2}.$$
We choose $\lambda=(e+1/t)^{1/2}$ for $0<t<1/e$. Then it holds by $t\lambda^2=et+1<2$ that
$$M(t) \ge \frac{1}{2\sqrt{\pi}}e^{-2}(\log (e+1/t)^{1/2})^{1/2}=\frac{1}{2\sqrt{2\pi}e^2}(\log (e+1/t))^{1/2},$$
which yields the desired result. This completes the proof of Theorem \ref{optCSE}.
\end{proof}


\section*{Acknowledgment}

TO is grateful to Luis Vega and Nicola Visciglia for enlightening discussions.

YCH is partially supported by JSPS Invitational Fellowship for Research in Japan \#S24040. TO is partially supported by JSPS Grant-in-Aid for Scientific Research (S) \#JP24H00024. CS is partially supported by the ANR project Smooth ANR-22-CE40-0017. TT is partially supported by JSPS Grant-in-Aid for JSPS Fellows \#JP24KJ0122 and for Early-Career Scientists \#JP24K16954.

\bibliography{Critical-Sobolev-bib}

\end{document}